\documentclass[12pt]{amsart}
\usepackage{graphicx}

\textwidth    17cm
\textheight    24cm
\oddsidemargin    -5pt
\evensidemargin -5pt
\topmargin    -10pt
\baselineskip15pt

\theoremstyle{plain} \newtheorem{thm}{Theorem}[section]
\newtheorem{cor}[thm]{Corollary}
\newtheorem{lemma}[thm]{Lemma}

\theoremstyle{definition} 
\newtheorem{defin}[thm]{Definition}

\usepackage{graphicx}
\DeclareGraphicsExtensions{.pdf,.jpg,.png}

\input{xy}
\xyoption{all}
\begin{document}

\title{On convergence in the subpower Higson corona of metric spaces}
\author{Jacek Kucab}
\author{Mykhailo Zarichnyi}

\address{Faculty of Mathematics and Natural Sciences,
University of Rzesz\'ow, Rejtana 16 A, 35-310 Rze\-sz\'ow, Poland}
\email{jacek.kucab@wp.pl}
\email{zarichnyi@yahoo.com}
\subjclass[2010]{Primary 54D35; Secondary 54E35, 54D40}

\begin{abstract}
The subpower Higson corona of a proper metric space is defined in \cite{KZ}. We prove that, unlikely to the Higson corona, the closure of a $\sigma$-compact subset of the subpower Higson corona of a proper unbounded metric space does not necessarily coincide with its  Stone-\v{C}ech corona.

\end{abstract}
\maketitle
\section{Introduction}
The notions of Higson compactification and Higson corona play an important role in coarse geometry and, in particular, in the asymptotic dimension theory (see, e.g., \cite{R,DKU, BD}).

The sublinear Higson corona $\nu_L(X)$ was introduced and investigated in \cite{DS} in connection with the asymptotic Assouad-Nagata dimension theory. The subpower Higson corona of a proper metric space is defined in \cite{KZ}. This is a counterpart of the sublinear Higson corona. In some sense, the subpower Higson corona is an intermediate corona between the sublinear Higson corona and the Higson corona.

J. Keesling proved that the closure of a $\sigma$-compact subset in the Higson corona of a proper metric space is homeomorphic to its Stone-\v Cech compactification \cite[Theorem 1]{K}. In particular, the Higson corona does not contain non-trivial convergent sequences. This is not the case for the sublinear corona as it is proved in \cite{DS} that the sublinear corona  $\nu_L(X\times\mathbb R_+)$ contains a topological copy of $\nu_L(X)\times[0,1]$. 

In the first version of this note the authors erroneously claimed that the subpower corona behaves similarly to the remainder of the Stone-\v Cech compactification, i.e., the  sublinear Higson corona of a noncompact proper metric space does not contain nontrivial convergent sequences. The authors express their gratitude to Roman Pol and Yutaka Iwamoto for indicating the gap in the proof. 

\section{Preliminaries}
In the sequel, $(X,d)$ is assumed to be an unbounded, proper metric space with the basepoint $x_0$. Recall that a metric space is said to be proper if every its bounded and closed subset is compact.

Let $\vert x\vert=d(x,x_0)$ for $x\in X$.

The following notion is more general than that introduced in \cite{KZ}.

\begin{defin}
A function $p\colon \mathbb{R}_+\to\mathbb{R}_+$ is called \textit{asymptotically subpower} if for every $\alpha>0$ there exists $t_0>0$ such that $p(t)< t^\alpha$ for all $t>t_0$.
\end{defin}

\begin{defin}
A bounded and continous function $f:X\to\mathbb{R}$ is called \textit{Higson subpower} if for every asymptotically subpower function $p$ we have $\mathrm{lim}_{\vert x\vert\to\infty}\mathrm{diam}(f(B_{p(\vert x\vert)}(x)))=0$, more precisely if for any asymptotically subpower function $p$ and $\epsilon>0$ there exists $r_0>0$ that $\mathrm{diam}(f(B_{p(\vert x\vert)}(x)))<\epsilon$ for all $x\in X$ such that $\vert x\vert>r_0$.
\end{defin}

It is easy to verify that Higson subpower functions form a closed subalgebra in the algebra of all continous and bounded functions on a proper metric space $X$. This algebra contains the constant functions and separates points and closed sets. The compactification of $X$ that corresponds to this subalgebra is called the Higson subpower compactification and is denoted by $h_P X$. The remainder of this compactification $h_P X\setminus X$ is denoted by $\nu_P X$ and is called the subpower Higson corona of $X$.

A subset
$D$
of a space
$X$
is called
$C*$-embedded if any continuous bounded function on $D$ can be continuously extended over $X$.

\section{Results}

\begin{lemma}\label{3.4.2}

For any subsets $A,B$ of a space $X$ such that $\nu_P X\cap\overline{A}\cap\overline{B}=\emptyset$ there exist constants $\alpha, r_0>0$ such that $\mathrm{max}\lbrace d(x,A),d(x,B)\rbrace\geq \vert x\vert^\alpha$ for every $x\in X$ such that $\vert x\vert\geq r_0$.

\end{lemma}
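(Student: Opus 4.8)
The plan is to prove the contrapositive. Suppose the growth estimate fails. Then, applying the negation of the conclusion with $\alpha=1/n$ and $r_0=n$, for every $n$ we obtain a point $x_n\in X$ with $|x_n|\ge n$ and $\max\{d(x_n,A),d(x_n,B)\}<|x_n|^{1/n}$. Choosing $a_n\in A$ and $b_n\in B$ with $d(x_n,a_n),d(x_n,b_n)<|x_n|^{1/n}$, I obtain sequences with $|x_n|\to\infty$ whose mutual distances grow sub-powerly in $|x_n|$. The goal is to manufacture a common limit point $\xi\in\nu_P X\cap\overline{A}\cap\overline{B}$ in $h_P X$, contradicting the hypothesis. (If $A$ or $B$ is empty the conclusion is vacuous, so I may assume both nonempty.)

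The technical heart is the following closeness criterion: if $|u_n|\to\infty$ and $d(u_n,v_n)\le|u_n|^{\beta_n}$ with $\beta_n\to0$, then $f(u_n)-f(v_n)\to0$ for every Higson subpower function $f$. To prove it I would build a single asymptotically subpower gauge $p$ dominating the distances at the sample points. After replacing $\beta_n$ by a decreasing sequence still tending to $0$ (and $\ge\beta_n$), I define a continuous $\delta\colon\mathbb{R}_+\to\mathbb{R}_+$, piecewise linear, with $\delta(|u_n|)=\beta_n$ and $\delta(t)\to0$, and set $p(t)=2t^{\delta(t)}$. Since $\delta(t)\to0$, for each $\alpha>0$ we have $p(t)<t^\alpha$ for all large $t$, so $p$ is asymptotically subpower; and $v_n\in B_{p(|u_n|)}(u_n)$ by construction. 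Then $f(u_n),f(v_n)\in f(B_{p(|u_n|)}(u_n))$, whose diameter tends to $0$ by the definition of a Higson subpower function, giving the claim. Applying this to $(x_n,a_n)$ and $(x_n,b_n)$ yields $f(a_n)-f(x_n)\to0$ and $f(b_n)-f(x_n)\to0$ for every Higson subpower $f$.

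Finally I would pass to the compactification. As $h_P X$ is compact, the sequence $(x_n)$ has a cluster point $\xi$; since $|x_n|\to\infty$ and $X$ is proper (so every bounded set is compact and the $x_n$ escape all of them), $\xi$ cannot lie in $X$, hence $\xi\in\nu_P X$. Along a subnet with $x_{n_d}\to\xi$ one has $\hat f(x_{n_d})\to\hat f(\xi)$ for the continuous extension $\hat f$ of every Higson subpower $f$; combined with $f(a_n)-f(x_n)\to0$ this gives $\hat f(a_{n_d})\to\hat f(\xi)$ for all such $f$. Because the (extensions of the) Higson subpower functions separate the points of $h_P X$, this forces $a_{n_d}\to\xi$, whence $\xi\in\overline{A}$; symmetrically $\xi\in\overline{B}$. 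Thus $\xi\in\nu_P X\cap\overline{A}\cap\overline{B}$, the desired contradiction.

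I expect the construction of the gauge $p$ to be the main obstacle: the point is to produce one asymptotically subpower function that pins down the prescribed values $|u_n|^{\beta_n}$ at the abscissae $|u_n|$ while still staying below $t^\alpha$ for every $\alpha>0$. The device $p(t)=2t^{\delta(t)}$ with $\delta(t)\to0$ is exactly what reconciles these two requirements, and once it is in place the remaining passage to the corona through separating functions and a subnet is routine.
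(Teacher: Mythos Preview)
Your argument is correct but organized in the reverse order from the paper's. The paper invokes the hypothesis $\nu_P X\cap\overline A\cap\overline B=\emptyset$ at the outset: by normality of $h_P X$ it picks a single continuous $F\colon h_P X\to\mathbb R$ with $F\equiv1$ on $\overline A\cap\nu_P X$ and $F\equiv0$ on $\overline B\cap\nu_P X$, so that $f=F|_X$ is Higson subpower; then, having built an asymptotically subpower gauge (a step function $p(t)=|x_n|^{1/n}$ on $[|x_n|,|x_{n+1}|)$), it notes that $a_n,b_n\in B_{p(|x_n|)}(x_n)$ while $f(a_n)>3/4$ and $f(b_n)<1/4$ for large $n$, giving $\mathrm{diam}\,f(B_{p(|x_n|)}(x_n))>1/2$ and contradicting the subpower condition for this one $f$. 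You instead first establish a closeness criterion valid for \emph{every} Higson subpower $f$, and then use that these functions generate the topology of the compact Hausdorff space $h_P X$ (any point-separating family of continuous functions does) to locate an explicit point $\xi\in\nu_P X\cap\overline A\cap\overline B$. The paper's route is shorter and avoids the subnet/weak-topology step; yours isolates a reusable lemma and actually exhibits the offending corona point. One minor wrinkle: your piecewise-linear $\delta$ with $\delta(|u_n|)=\beta_n$ tacitly assumes the $|u_n|$ are strictly increasing, which in the application is harmless since you may arrange $|x_n|>|x_{n-1}|+1$ when choosing the $x_n$.
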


\begin{proof}
Let $A,B\subset X$ and let $\nu_P X\cap\overline{A}\cap\overline{B}=\emptyset$. Suppose, that the condition does not hold, i. e. for any $\alpha, r_0>0$ there exists $x\in X$, such that $\vert x\vert\geq r_0$, but $\mathrm{max}\lbrace d(x,A),d(x,B)\rbrace< \vert x\vert^\alpha$. Therefore there exist a sequence $(x_n)$ of elements of $X$ with the property, that for all $n\in\mathbb{N}$ it is $\vert x_n\vert\geq 2^{n^2}$ and $\mathrm{max}\lbrace d(x_n,A),d(x_n,B)\rbrace< \vert x_n\vert^\frac{1}{2n}$. Thus, there exist sequences $(a_n)$ and $(b_n)$ of elements of $A$ and $B$ respectively, such that $d(x_n,a_n)<\vert x_n\vert^\frac{1}{2n}$ and $d(x_n,b_n)<\vert x_n\vert^\frac{1}{2n}$. Let us define continuous function $F:h_P X\to \mathbb{R}$ such that $F_{\vert \overline{A}\cap \nu_P X}=1$ and $F_{\vert \overline{B}\cap \nu_P X}=0$. Existence of such a function is guaranteed by normality of the space $h_P X$ and fact that sets $\overline{A}\cap \nu_P X$ and $\overline{B}\cap \nu_P X$ are closed and disjoint. Since $F$ is continuous, it is an extension of some Higson subpower function $f\colon X\to \mathbb{R}$. Moreover there exist open neighborhoods $U$ and $V$ of $\overline{A}\cap \nu_P X$ and $\overline{B}\cap \nu_P X$ respectively, which contains every but finite elements of sequences $(a_n)$ and $(b_n)$ respectively, such that $f_{\vert U}>3/4$ and $f_{\vert V}<1/4$. Indeed, every infinite sequence in a compact Hausdorff space has at least one cluster point and for the sequences $(a_n)$ (respectively  $(b_n)$) all the cluster points can belong only to $\overline{A}\cap \nu_P X$ and (respectively $\overline{B}\cap\nu_P X$). No point of $X$ can be a cluster point of these sequences, because unboundedness of $(\vert x_n\vert)$ easily implies unboundedness of $(\vert a_n\vert)$ and $(\vert b_n\vert)$.

Now let us consider the function $p\colon \mathbb{R}_+\to \mathbb{R}_+$ defined as follows: $p(t)=1$ for $t<\vert x_1\vert$ and $p(t)=\vert x_n\vert^{1/n}$ for $t\in[\vert x_n\vert,\vert x_{n+1}\vert)$, $n=1,2,\dots$. The function $p$ is asymptotically subpower, as for any $\alpha>0$ there exists $n\in \mathbb{N}$, such that $1/n<\alpha$, and therefore taking $r_0=\vert x_n\vert\geq 1$ for $t>r_0$ with the property that $t\in[\vert x_m\vert,\vert x_{m+1}\vert)$ the condition $p(t)=\vert x_m\vert^{1/m}\leq t^{1/m}\leq t^{1/n}<t^\alpha$ holds.

Moreover, for any $n\in\mathbb{N}$ we have $a_n, b_n\in B_{p(\vert x_n\vert)}(x_n)$, as $\vert x_n\vert^\frac{1}{2n}<\vert x_n\vert^\frac{1}{n}=p(\vert x_n\vert)$ and so $\mathrm{diam}(B_{p(\vert x_n\vert)}(x_n))>1/2$ for every but finite elements of $(x_n)$.

Therefore $\mathrm{lim}_{\vert x_n\vert\to \infty}\mathrm{diam}(B_{p(\vert x_n\vert)}(x_n))\neq 0$. Contradiction.
\end{proof}

The following statement is a counterpart of the corresponding property of the sublinear corona \cite{DS}.

\begin{lemma}\label{3.4.3} For every subsets $E_1,E_2$ of a space $X$ the fact that there exist $r_0,\alpha>0$ such that $\max\lbrace\rho(x,E_1),\rho(x,E_2)\rbrace\geq\vert x\vert ^\alpha$ for all $x\in X$ with $\vert x\vert \geq r_0$ implies $$\nu_P X \cap \overline{E_1}\cap\overline{E_2}=\emptyset.$$
\end{lemma}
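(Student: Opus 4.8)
The plan is to reduce the statement to the construction of a single Higson subpower function that separates the two sets near infinity. Since $h_P X$ is the compactification associated to the algebra of Higson subpower functions, it suffices to exhibit a Higson subpower function $f\colon X\to\mathbb{R}$ whose continuous extension $F\colon h_P X\to\mathbb{R}$ takes one value on $\overline{E_1}\cap\nu_P X$ and a different value on $\overline{E_2}\cap\nu_P X$; this forces these two compact sets to be disjoint, which is exactly $\nu_P X\cap\overline{E_1}\cap\overline{E_2}=\emptyset$. I would therefore aim for an $f$ with $f|_{E_1}$ tending to $0$ and $f|_{E_2}$ tending to $1$ as $\vert x\vert\to\infty$: as in the proof of Lemma \ref{3.4.2}, a point of $\overline{E_j}\cap\nu_P X$ is a limit of a net in $E_j$ along which $\vert x\vert\to\infty$ (no point of $X$ can be a cluster point, by properness), so $F$ simply reads off the corresponding limit value.

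For the construction I would set $u(x)=\rho(x,E_1)$ and $v(x)=\rho(x,E_2)$, both $1$-Lipschitz, and define the Urysohn-type function
\[
f(x)=\frac{u(x)}{u(x)+v(x)+1}.
\]
The $+1$ in the denominator keeps $f$ continuous and bounded (with values in $[0,1]$) on all of $X$, avoiding division by zero, while its behaviour at infinity is governed by the hypothesis: for $\vert x\vert\geq r_0$ one has $u(x)+v(x)\geq\max\{u(x),v(x)\}\geq\vert x\vert^\alpha$. On $E_1$ we have $u\equiv 0$, hence $f\equiv 0$; on $E_2$ we have $v\equiv 0$, hence $f=u/(u+1)$, and since $u(x)\geq\vert x\vert^\alpha\to\infty$ along $E_2$ this tends to $1$. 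This yields the desired boundary values $F\equiv 0$ on $\overline{E_1}\cap\nu_P X$ and $F\equiv 1$ on $\overline{E_2}\cap\nu_P X$.

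The heart of the proof, and the step I expect to be the main obstacle, is verifying that $f$ is Higson subpower, since the slow-variation condition must hold on balls whose radius is an \emph{arbitrary} asymptotically subpower function $p$. Because $X$ need not be geodesic, I cannot differentiate or integrate $f$ along paths; instead I would establish the elementary pointwise bound
\[
\vert f(y)-f(y')\vert\leq\frac{d(y,y')}{u(y)+v(y)+1}
\]
directly, by clearing denominators and using $\vert u(y)-u(y')\vert\leq d(y,y')$ and $\vert v(y)-v(y')\vert\leq d(y,y')$. Fixing $p$ and $\varepsilon>0$ and taking $y,y'\in B_{p(\vert x\vert)}(x)$, one has $d(y,y')\leq 2p(\vert x\vert)$ and, since $\vert y\vert\geq\vert x\vert-p(\vert x\vert)\geq\vert x\vert/2$ for large $\vert x\vert$, also $u(y)+v(y)+1\geq\vert y\vert^\alpha\geq 2^{-\alpha}\vert x\vert^\alpha$. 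Hence $\mathrm{diam}(f(B_{p(\vert x\vert)}(x)))\leq 2^{1+\alpha}\,p(\vert x\vert)/\vert x\vert^\alpha$. As $p$ is asymptotically subpower we have $p(t)<t^{\alpha/2}$ for large $t$, so this is at most $2^{1+\alpha}\vert x\vert^{-\alpha/2}\to 0$; the key mechanism is that the fixed transition scale $\vert x\vert^\alpha$ forced by the hypothesis dominates every subpower radius $p(\vert x\vert)$. Thus $f$ is Higson subpower, its extension $F$ separates the two corona closures, and the lemma follows.
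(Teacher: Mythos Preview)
Your proof is correct and follows essentially the same route as the paper: build a Urysohn-type quotient out of the two distance functions and verify the Higson subpower condition via the oscillation estimate $|f(y)-f(y')|\le C\,d(y,y')/|x|^{\alpha}$, which is killed by any subpower radius $p(|x|)$. The only cosmetic difference is in how division by zero is avoided: the paper first replaces $E_i$ by $F_i=E_i\setminus B_{r_0+r_0^{\alpha}}(x_0)$ so that $\rho(\cdot,F_1)+\rho(\cdot,F_2)\ge |x|^{\alpha}$ everywhere and obtains two functions $g_i=\rho(\cdot,F_i)/(\rho(\cdot,F_1)+\rho(\cdot,F_2))$ that sum to $1$ and vanish exactly on $F_i$, whereas you keep $E_i$ and add $+1$ in the denominator, paying the small price of a net/cluster-point argument to read off $F\equiv 1$ on $\overline{E_2}\cap\nu_P X$.
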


\begin{proof} Let $E_1,E_2\subset X$ and let $r_0$ and $\alpha$ satisfy the assumptions. Let $F_i=E_i\setminus B_{r_0+r_0^\alpha}(x_0)$ for $i\in\lbrace 1,2\rbrace$. let also $f\colon X\to \mathbb{R}$ be defined by the formula $f(x)=\rho(x,F_1)+\rho(x,F_2)$, $x\in X$. Remark that $f(x)\geq \vert x\vert ^\alpha$ for all $x$ with $\vert x\vert\geq r_0$, and also $f(x)\geq r_0^\alpha\geq \vert x\vert ^\alpha$ for all $x$ with $\vert x\vert\leq r_0$. Thus, for every $x\in X$ we have $f(x)\geq \vert x\vert ^\alpha$ and $f(x)>0$. For $i\in\lbrace 1,2\rbrace$, define a function $g_i\colon X\to \mathbb{R}$, $g_i(x)=\rho(x,F_i)/f(x)$. Let $p$ be an arbitrary asymptotically subpower function and let $\epsilon>0$. Let $\tilde{r_0}>(3/\epsilon)^{2/\alpha}$ be large enough so that for every $x\in X$ with $\vert x\vert>\tilde{r_0}$ we have $p(\vert x\vert)<\vert x\vert^{\alpha/2}$ and let $y\in B_{p(\vert x\vert)}(x)$. Then for $i\in\lbrace 1,2\rbrace$ the function $g_i$ is continuous, bounded and $$\vert g_i(y)-g_i(x)\vert = \left\vert \dfrac{\rho(y,F_i)}{f(y)}-\dfrac{\rho(y,F_i)}{f(x)}+\dfrac{\rho(y,F_i)}{f(x)}-\dfrac{\rho(x,F_i)}{f(x)}\right\vert\leq$$$$\rho(y,F_i)\left\vert \dfrac{1}{f(y)}-\dfrac{1}{f(x)}\right\vert+\dfrac{\vert \rho(y,F_i)-\rho(x,F_i) \vert}{f(x)}\leq $$$$\dfrac{\rho(y,F_i)}{f(x)f(y)}\vert f(x)-f(y)\vert+\dfrac{\rho(y,x)}{f(x)}\leq\dfrac{\vert f(x)-f(y)\vert}{f(x)}+\dfrac{\rho(x,y)}{f(x)}\leq$$$$\dfrac{2\rho(y,x)}{f(x)}+\dfrac{\rho(x,y)}{f(x)}\leq 3\dfrac{\rho(x,y)}{\vert x\vert ^\alpha}.$$

Then for every $x\in X$ with $\vert x\vert> \tilde{r_0}$ we have $\vert g_i(y)-g_i(x)\vert<\epsilon$, and thus $g_i$ is a subpower Higson function for $i\in\lbrace 1,2\rbrace$. Let $\bar{g_i}\colon h_P X\to \mathbb{R}$ be the unique  extension of $g_i$ onto the subpower Higson compactification, $i\in\lbrace 1,2\rbrace$. Since $g_1(x)+g_2(x)=1$ for every $x\in X$, we see that $\bar{g_1}(x)+\bar{g_2}(x)=1$  for every  $x\in h_P X$. Moreover, $\overline{F_i}\subset \bar{g_i}^{-1}(\lbrace 0\rbrace)$ and $\nu_P X\cap\overline{F_i} = \nu_P X\cap\overline{E_i}$ for $i\in\lbrace 1,2\rbrace$. Therefore $$\nu_P X\cap \overline{E_1}\cap\overline{E_2}=\nu_P X\cap \overline{F_1}\cap\overline{F_2}\subset\nu_P X\cap \bar{g_1}^{-1}(\lbrace 0\rbrace)\cap\bar{g_2}^{-1}(\lbrace 0\rbrace)=\emptyset.$$

\end{proof}

The proof of the following result is suggested by Roman Pol.

\begin{thm} There exists a proper unbounded metric space whose subpower corona contains  a $\sigma$-compact subset which is not  $C^\ast$-embedded.
\end{thm}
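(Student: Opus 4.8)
The plan is to realize the statement through the standard criterion for $C^\ast$-embedding: a subset $D$ of a compact space $Z$ fails to be $C^\ast$-embedded as soon as there are two subsets $E_1,E_2\subseteq D$ that are completely separated \emph{inside} $D$ (so some continuous bounded $h\colon D\to\mathbb{R}$ is $0$ on $E_1$ and $1$ on $E_2$) but whose closures in $Z$ meet, since completely separated sets have disjoint closures. Lemma \ref{3.4.3} is exactly the tool that produces complete separation (a power gap yields a subpower Higson function separating the two sets), while Lemma \ref{3.4.2} is exactly the tool that forces closures to intersect (absence of any power gap). The whole problem is thus to build one metric space in which these two opposite phenomena coexist for a suitable $\sigma$-compact $D$.

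For the space I would take a rapidly branching ``scaled tree'' $X$: a basepoint $x_0$, a main ray carrying marked points $c_k$ with $|c_k|=R_k$ for a very fast sequence $R_k$ (say $R_k=2^{2^k}$), and, attached at each $c_k$, finitely many pendant segments with endpoints $x_{k,n}$ for $1\le n\le k$ and $d(x_{k,n},c_k)=R_k^{1/n}$, endowed with the geodesic (tree) metric. This $X$ is unbounded, and it is proper because every ball meets only finitely many scales and finitely many branches per scale. Put $S_n=\{x_{k,n}:k\ge n\}$, let $P_n=\overline{S_n}\cap\nu_P X$ (each nonempty since $S_n$ is unbounded), and set $D=\bigcup_{n\ge 1}P_n$, which is $\sigma$-compact.

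The pieces are arranged so that, in the tree metric, every marked point other than $x_{k,n}$ lies at distance at least $R_k^{1/n}=|x_{k,n}|^{\,\approx 1/n}$ from $x_{k,n}$; hence for each fixed $n$ the set $S_n$ is power-separated (with a fixed exponent slightly below $1/n$) from $\bigcup_{m\ne n}S_m$. By Lemma \ref{3.4.3} this makes each $P_n$ relatively clopen in $D$, so the $\pm1$ colouring $h|_{P_n}=(-1)^n$ is continuous on $D$ and the unions $E_1=\bigcup_{n\text{ even}}P_n$ and $E_2=\bigcup_{n\text{ odd}}P_n$ are completely separated in $D$. On the other hand, applying Lemma \ref{3.4.2} to $\bigcup_{n\text{ even}}S_n$ and $\bigcup_{n\text{ odd}}S_n$ with the test points $x=c_k$ gives $\max\{d(c_k,\cdot),d(c_k,\cdot)\}\le R_k^{1/k}<|c_k|^{\alpha}$ for every $\alpha$ once $k$ is large, since the nearest even and nearest odd satellites sit at exponent $1/k\to 0$; thus these two $X$-sets have a common point in their coronal closures.

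The principal obstacle, and the heart of the argument, is to upgrade the last statement from the $X$-sets $\bigcup_{n\text{ even}}S_n,\ \bigcup_{n\text{ odd}}S_n$ to the coronal sets $E_1,E_2$, i.e. to guarantee that the common closure point is genuinely a limit of the pieces $P_n$ and not a purely diagonal limit invisible to them. I expect to exploit the mechanism specific to the subpower corona: because the exponent $1/n$ of a level relative to the base ray tends to $0$, the high-index satellites lie within \emph{sub-power} distance of the $c_k$, so at infinity the high even levels and the high odd levels both collapse onto the base trace $P_0=\overline{\{c_k\}}\cap\nu_P X$; this $P_0$ should then be a common limit of $E_1$ and $E_2$ while remaining disjoint from every $P_n$ (each level staying at a fixed positive exponent, so $P_0\subseteq\overline{D}\setminus D$). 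Reconciling the two opposing demands — each level power-separated, so the pieces are clopen and $h$ is continuous, yet the even and odd tails sub-power-merging, so the closures meet — is precisely the behaviour that the weaker subpower separation permits but that Keesling's theorem forbids in the Higson corona, and carrying this last step out carefully through Lemmas \ref{3.4.2} and \ref{3.4.3} is where the real work lies.
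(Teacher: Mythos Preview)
Your strategy is essentially the paper's own: a one-parameter family of ``levels'' $S_n$ sitting at exponent $1/n$ from a base ray, parity colouring, Lemma~\ref{3.4.3} to make each coronal piece $P_n$ clopen in $D$, and the base-ray trace $P_0$ as the point where the even and odd closures must meet. The paper uses the planar region $Y=\{(y_1,y_2):0\le y_2\le\sqrt{y_1}\}$ with the curves $Y_i=\{(y_1,y_1^{1/i})\}$ in place of your tree with pendant endpoints, but the geometry is the same, and your tree works equally well.

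The step you flag as the principal obstacle is indeed the crux, and your proposal stops short of it. The issue is not, as you phrase it, upgrading from the $X$-closures of $\bigcup S_n^{\text{even}}$ and $\bigcup S_n^{\text{odd}}$ to the coronal closures; rather, one proves $P_0\subset\overline{E_1}$ and $P_0\subset\overline{E_2}$ directly, and the mechanism is slightly different from what you sketch. Suppose for contradiction that some $p\in P_0\setminus\overline{E_2}$. By normality of $h_PX$ choose disjoint open sets $U'\ni p$ and $V'\supset\overline{E_2}$ with $\overline{U'}\cap\overline{V'}=\emptyset$, and set $U=\overline{U'}\cap X$, $V=\overline{V'}\cap X$. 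Now the key observation you are missing: for \emph{each} fixed odd $n$, since $P_n\subset\overline{E_2}\subset V'$, the set $S_n\setminus V'$ is closed in $\overline{S_n}$ and misses $\nu_P X$, hence is compact and therefore bounded; so $S_n\subset V$ outside some radius $r_n'$. Likewise $U$ contains an unbounded subset of $\{c_k\}$ because $p\in\overline{\{c_k\}}$. By Lemma~\ref{3.4.2} applied to $U$ and $V$ there is a power gap with some exponent $\tilde\alpha$. Pick odd $n>1/\tilde\alpha$, then pick $c_k\in U$ with $k\ge n$ and $|c_k|>r_n'$; then $x_{k,n}\in V$ and $d(c_k,x_{k,n})=R_k^{1/n}<|c_k|^{\tilde\alpha}$, contradicting the gap. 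This is exactly the argument the paper carries out (there for $K_0\subset\overline{K}$, with the remark that the same proof gives $K_0\subset\overline{K'}$ and $K_0\subset\overline{K''}$), and it transfers verbatim to your tree. Once you supply this, your outline becomes a complete proof along the same lines as the paper's.
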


\begin{proof} Let $\mathbb{N}^*=\mathbb{N}\setminus\lbrace 1\rbrace$ and let $$Y=\lbrace (y_1,y_2)\in\mathbb{R}^2\mid y_1\geq 0\, ,\, y_2\in [0,\sqrt{y_1}\,]\rbrace.$$ Let $y_0=(0,0)$. We endow $Y$ with the max-metric $d$. Note that for $y=(y_1,y_2)\in Y$ such that $\vert y\vert\geq 1$ we have $\vert y\vert=y_1$. For $i\in \mathbb{N}^*$ let $$Y_i=\lbrace (y_1,y_1^{1/i})\mid y_1\geq 2 \rbrace$$ and $$K_i=\overline{Y_i}\cap\nu_P Y=\nu_P Y_i.$$ Let also $K=\bigcup_{i\in\mathbb{N}^*}K_i$, $K^\prime=\bigcup_{i\in 2\mathbb{N}+1}K_i$ and $K^{\prime\prime}=\bigcup_{i\in 2\mathbb{N}}K_i$.

Remark that $K=K^\prime\cup K^{\prime\prime}$ is a  $\sigma$-compact subspace of  $\nu_P Y$ and consider a function $f\colon K\to [0,1]$ such that $f_{\vert K^\prime}\equiv 1$ and $f_{\vert K^{\prime\prime}}\equiv 0$. Clearly, $f$ is bounded. Let us verify that  $f$ is well defined and continuous. To this end, let us show that for every $i\in\mathbb{N}^*$ there exists a set $W\subset Y$ such that $Y_i\subset W$, $\bigcup_{j\in\mathbb{N}^*\setminus\lbrace i\rbrace}Y_j\subset Y\setminus W$ and there exists $\alpha,r_0>0$ such that for every   $y=(y_1,y_2)\in Y$ with $\vert y\vert\geq r_0$ we have $$\mathrm{max}\lbrace d(y,Y_i),d(y,Y\setminus W)\rbrace\geq \vert y\vert^\alpha.$$

Let $i\in\mathbb{N}^*$ and $$W=\left\{ (y_1,y_2)\in Y\mid y_1\geq 2,\ \ y_2\in \left(y_1^{1/(i+1)},y_1^{1/(i-1)}\right)\right\}.$$ Clearly, $Y_i\subset W$ and $\bigcup_{j\in\mathbb{N}^*\setminus\lbrace i\rbrace}Y_j\subset Y\setminus W$. Without loss of generality, one may consider the points $y\in Y$ with $\vert y\vert>2$ and $y_1^{1/(i+1)}\leq y_2\leq y_1^{1/(i-1)}$. Let $\alpha=\frac{1}{2i(i+1)}$ and $r_0=8^{2i(i+1)}$. Assume that $y_1^{1/(i+1)}\leq y_2\leq y_1^{1/i}$. Then

$$\mathrm{max}\lbrace d(y,Y_i),d(y,Y\setminus W)\rbrace=\mathrm{max}\lbrace d(y,Y_i),\mathrm{min}\lbrace d(y,Y_{i+1}),d(y,Y_{i-1})\rbrace\rbrace.$$

Consider the following two cases.

(1) If $\mathrm{min}\lbrace d(y,Y_{i+1}),d(y,Y_{i-1})\rbrace=d(y,Y_{i+1})$, then

$$\mathrm{max}\lbrace d(y,Y_i),d(y,Y\setminus W)\rbrace\geq\frac{1}{4}\left(y_1^{\frac{1}{i}}-y_1^{\frac{1}{i+1}}\right)\geq\frac{1}{4}\left(y_1^{\frac{1}{i}-\frac{1}{i+1}}-1\right)\geq$$ $$\frac{1}{8}y_1^{\frac{1}{i(i+1)}}\geq y_1^{\frac{1}{2i(i+1)}}=\vert y\vert^\alpha.$$

(2) If $\mathrm{min}\lbrace d(y,Y_{i+1}),d(y,Y_{i-1})\rbrace=d(y,Y_{i-1})$, then

$$\mathrm{max}\lbrace d(y,Y_i),d(y,Y\setminus W)\rbrace\geq\frac{1}{4}\left(y_1^{\frac{1}{i-1}}-y_1^{\frac{1}{i}}\right)\geq\frac{1}{4}\left(y_1^{\frac{1}{i-1}-\frac{1}{i}}-1\right)\geq$$ $$\frac{1}{8}y_1^{\frac{1}{i(i-1)}}\geq y_1^{\frac{1}{2i(i-1)}}\geq\vert y\vert^\alpha.$$

Similar calculations can be made also for the case $y_1^{1/i}\leq y_2\leq y_1^{1/(i-1)}$. Using Lemma \ref{3.4.3} we obtain $$\nu_P Y\cap\overline{Y_i}\cap\overline{Y\setminus W}=\emptyset.$$ Moreover, for every $j\in\mathbb{N}^*$, $j\neq i$, we have $$\overline{Y_j}\cap\nu_P Y\subset \overline{Y\setminus W}\cap\nu_P Y=:F,$$ i.e., $K_j\subset F$. Since  $j\neq i$ is arbitrary, we also have $\bigcup_{j\neq i}K_j\subset F$. At the same time $K_i\cap F=\emptyset$, thus $f$ is well-defined. Therefore, $K_i\subset\nu_P Y\setminus F$, and, since $F$ is closed in $\nu_P Y$, we obtain that $K_i$ is open in $K$. Since $i$ is arbitrary, we obtain that the sets $K^\prime$ and $K^{\prime\prime}$ are simultaneously open and closed in $K$, which implies the continuity of $f$.

Now, let $Y_0=\lbrace (y_1,0)\in Y\mid y_1\geq 2\rbrace$ and let $K_0=\nu_P Y_0$. Show that $K_0\subset\overline{K}$. Assume the contrary. Then there exists $k\in K_0\setminus\overline{K}$. By the normality of $h_P Y$, there exist disjoint open neighborhoods $U_k$ and $V_{\overline{K}}$ of the point $k$ and the set $\overline{K}$ in the space $h_P Y$ respectively. There are neighborhoods $U_k^\prime$ and $V_{\overline{K}}^\prime$ of $k$ and $\overline{K}$ respectively such that $\overline{U_k^\prime}\subset U_k$ and $\overline{V_{\overline{K}}^\prime}\subset V_{\overline{K}}$. Let $U=\overline{U_k^\prime}\cap Y$ and $V=\overline{V_{\overline{K}}^\prime}\cap Y$. Remark that $$\overline{U}\cap\overline{V}\cap\nu_P Y\subset \overline{U_k^\prime}\cap \overline{V_{\overline{K}}^\prime}\cap\nu_P Y\subset U_k\cap V_{\overline{K}}=\emptyset,$$ therefore by Lemma \ref{3.4.2} there exist constants $\tilde{\alpha}, \tilde{r_0}>0$ such that, for every $y\in Y$ with $\vert y\vert>\tilde{r_0}$ we have $$\mathrm{max}\lbrace d(y,U),d(y,V)\rbrace\geq \vert y\vert^{\tilde{\alpha}}.$$

Now, remark that, for every $\alpha>0$, there exist $i_\alpha\in\mathbb{N}^*$ and $r_{i_\alpha}>0$ such that, for every $y\in Y_0$ with $\vert y\vert>r_{i_\alpha}$ we have $$\mathrm{max}\lbrace d(y,Y_{i_\alpha}),d(y,Y_0)\rbrace<\vert y\vert^{\alpha}.$$ Indeed, let $\alpha>0$. Take $i_\alpha\in\mathbb{N}^*$ such that $i_\alpha>\frac{2}{\alpha}$ and $r_{i_\alpha}=2^{i_\alpha}$. At the same time, for $y=(y_1,0)\in Y_0$ such that $\vert y\vert=y_1>r_{i_\alpha}$ we have $$\mathrm{max}\lbrace d(y,Y_{i_\alpha}),d(y,Y_0)\rbrace=d(y,Y_{i_\alpha})\leq 2y_1^{\frac{1}{i_\alpha}}<y_1^{\frac{2}{i_\alpha}}<\vert y\vert ^\alpha.$$

Let $A=Y_0\cap U$. Since $k\in\overline{A}$, the set $A$ is unbounded in the metric space $Y$. Consider the set $A_0=\lbrace (a_n,0)\in A\mid a_n>n$, $n\in \mathbb{N}^*\rbrace$, and, for $i\in\mathbb{N}^*$ the sets $A_i=\lbrace (a_n,a_n^{1/i})\mid (a_n,0)\in A\rbrace.$ Clearly, $A_i\subset Y_i$ for every $i\in\mathbb{N}^*\cup\lbrace 0\rbrace$. Also, $A_0\subset U$. Moreover, since $K_i=\nu_P Y_i$ for every $i\in\mathbb{N}^*$, and the set $V_{\overline{K}}^\prime$ is an open neighborhood of $\overline{K}$ (in particular, for every $K_i$ z $i\in\mathbb{N}^*$), for every $i\in\mathbb{N}^*$ there exist $r_i^\prime>0$ such that for every $y\in Y_i$,  $\vert y\vert>r_i^\prime$, we have $y\in V_{\overline{K}}^\prime\cap Y\subset V$. Indeed, $Y_i\setminus V_{\overline{K}}^\prime=\overline{Y_i}\setminus V_{\overline{K}}^\prime$ is compact and therefore bounded subset of $Y$ for every $i\in\mathbb{N}^*$.

Let $\alpha,r_0>0$. Choose $i_\alpha\in\mathbb{N}^*$ and $r_{i_\alpha}>0$ as is described above. Choose a point $y^\prime=(a,0)\in A_0\subset Y$ so that $\vert y^\prime\vert=a>\mathrm{max}\lbrace r_0,r_{i_\alpha},2r_{i_\alpha}^\prime\rbrace$, where $r_{i_\alpha}^\prime$ is picked for $i_\alpha$ as described above. At the same time $$\mathrm{max}\lbrace d(y^\prime,U),d(y^\prime,V)\rbrace\leq \mathrm{max}\lbrace d(y^\prime,Y_0),d(y^\prime,Y_{i_\alpha})\rbrace< \vert y^\prime\vert^\alpha$$ and we obtain a contradiction.

One can similarly prove that $K_0\subset\overline{K^\prime}$ and $K_0\subset\overline{K^{\prime\prime}}$. By the definition,  $f$ cannot be continuously extended over $\nu_P Y$.

\end{proof}

\begin{cor} The closure of a $\sigma$-compact subset of the subpower corona of a proper unbounded metric space does not necessarily coincide with its  Stone-\v{C}ech corona.\end{cor}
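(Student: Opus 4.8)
The plan is to read the corollary off the preceding theorem by means of the standard dictionary between compactifications and $C^\ast$-embeddings. Recall that a compactification $cD$ of a Tychonoff space $D$ is homeomorphic to the Stone-\v{C}ech compactification $\beta D$ if and only if $D$ is $C^\ast$-embedded in $cD$. Hence, to witness that the closure of a $\sigma$-compact subset need not agree with its Stone-\v{C}ech compactification, it suffices to exhibit a $\sigma$-compact set that is not $C^\ast$-embedded in its own closure inside the corona.

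Concretely, I would take the proper unbounded metric space $Y$ and the $\sigma$-compact set $K=\bigcup_{i\in\mathbb{N}^\ast}K_i\subset\nu_P Y$ furnished by the theorem, together with the bounded continuous function $f\colon K\to[0,1]$ shown there to admit no continuous extension over $\nu_P Y$. Since $\nu_P Y$ is compact Hausdorff, its closed subset $\overline{K}$ is a compact Hausdorff space containing $K$ as a dense subspace, so $\overline{K}$ is a compactification of $K$, and $\beta K$ is defined because $K$, as a subspace of a compact Hausdorff space, is Tychonoff.

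The one step requiring an argument is that $f$ already fails to extend over $\overline{K}$. For this I would observe that a set is $C^\ast$-embedded in $\nu_P Y$ exactly when it is $C^\ast$-embedded in $\overline{K}$: restriction of an extension gives one implication, while for the other one uses that $\overline{K}$, being a closed subspace of the normal space $\nu_P Y$, is itself $C^\ast$-embedded in $\nu_P Y$ by the Tietze extension theorem, so any continuous extension of $f$ over $\overline{K}$ would propagate to all of $\nu_P Y$. As the theorem forbids the latter, $f$ does not extend over $\overline{K}$, that is, $K$ is not $C^\ast$-embedded in $\overline{K}$. By the characterization above this means $\overline{K}\neq\beta K$, so the closure in $\nu_P Y$ of the $\sigma$-compact set $K$ does not coincide with its Stone-\v{C}ech compactification, which is exactly the assertion of the corollary. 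The only genuinely non-formal point is this transfer of non-$C^\ast$-embeddedness from the ambient corona to the closure, which is precisely where normality of $\nu_P Y$ (equivalently, the Tietze theorem) enters; everything else is the standard equivalence between Stone-\v{C}ech compactifications and $C^\ast$-embeddings.
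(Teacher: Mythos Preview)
Your argument is correct and is exactly the intended deduction: the paper gives no separate proof of the corollary, leaving it as an immediate consequence of the theorem via the standard equivalence between $\beta K$ and $C^\ast$-embeddedness, together with the Tietze transfer from $\nu_P Y$ to the closed subset $\overline{K}$. There is nothing to add.
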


\end{document}